\documentclass[sn-mathphys,Numbered]{sn-jnl}

\usepackage{graphicx}
\usepackage{amsmath}
\usepackage{amssymb}
\usepackage{amsfonts}
\usepackage{mathrsfs}
\usepackage{mathtools}
\usepackage{setspace}

\theoremstyle{thmstyleone}
\newtheorem{theorem}{Theorem}[section]

\newtheorem{lemma}[theorem]{Lemma}

\theoremstyle{thmstylethree}

\theoremstyle{thmstyletwo}

\makeatletter
\@addtoreset{equation}{section}
\makeatother
\renewcommand{\theequation}{\thesection.\arabic{equation}}
\newcommand{\subclass}[1]{\par\addvspace\smallskipamount\noindent\textbf{Mathematics Subject Classification (2020):}\enspace#1}

\author{\fnm{Pablo} \sur{Fern\'andez Refolio}}
\email{pablezfr@gmail.com}

\affil{\orgdiv{Independent Researcher}, \orgaddress{\city{Madrid}, \country{Spain}}}

\begin{document}

\title{Ramanujan-Type Series of Signature 2: Analytical Evaluation via Degree-2 Modular Transformations}

\abstract{We provide an explicit analytical evaluation of Ramanujan-type series for \(1/\pi\) of signature 2. Focusing on the singular moduli \(k_{r}\) for \(r \in \{2, 3, 4, 7\}\), we demonstrate that the underlying elliptic identities can be established through modular transformations of degree 2.}

\keywords{Ramanujan-type series, Singular moduli, Modular equations, Elliptic integrals, Class invariants}

\maketitle

\subclass{33C05, 33E05, 11F03} 

\section{Introduction}

The evaluation of infinite series for $1/\pi$ constitutes a classic and profoundly influential domain within number theory and the theory of special functions, initiated most famously by Srinivasa Ramanujan in 1914. Ramanujan introduced dozens of remarkable formulas connecting elliptic integrals, modular functions, and hypergeometric series to the reciprocal of pi. In the decades following his seminal work, the framework of hypergeometric signatures was developed to classify these series based on alternative bases, with signature 2 remaining central due to its deep connections to the classical theory of elliptic integrals and modular forms. Central to the rigorous proof and derivation of these series is the theory of singular moduli—values of the modulus where the corresponding elliptic curve possesses complex multiplication. While various algorithmic and numerical methods have been deployed to verify these identities, providing explicit, closed-form analytical evaluations remains a mathematically rigorous challenge, particularly when mapping the precise algebraic relationships governed by low-degree modular equations.

This paper addresses these challenges by providing an explicit analytical evaluation of Ramanujan-type series for $1/\pi$ of signature 2. Focusing on the singular moduli $k_{r}$ for $r \in \{2, 3, 4, 7\}$, we demonstrate that the underlying elliptic identities can be established through modular transformations of degree 2. 

In particular, we shall demonstrate the simplest classical series for $1/\pi$ originally introduced by Ramanujan \cite{ramanujan1914}, as well as the celebrated Bauer's series \cite{bauer1859}. We shall establish all our proofs without invoking the sophisticated machinery of elliptic function theory; instead, our derivations rely strictly on the classical theory of complete elliptic integrals and hypergeometric functions.

\section{Preliminaries and Results}
The Euler Gamma function, denoted by $\Gamma(z)$, is defined via the following convergent improper integral for $\Re(z) > 0$
\begin{equation*}\label{eq:gamma_integral}
    \Gamma(z) = \int_0^\infty t^{z-1} e^{-t} \, dt.
\end{equation*}
To frame the complete elliptic integrals within the theory of hypergeometric functions, we recall Gauss's celebrated hypergeometric series~${}_2F_1(a,b;c;x)$ \cite{gauss1813}, which is defined for $0 < |x| < 1$ as
\begin{equation*}\label{eq:gauss_hypergeometric}
    {}_2F_1(a,b;c;x) = \sum_{m=0}^{\infty} \frac{(a)_m (b)_m}{(c)_m} \frac{x^m}{m!},
\end{equation*}
where $(a)_m = \Gamma(a+m)/\Gamma(a)$ denotes the rising factorial or Pochhammer symbol.
The complete elliptic integrals of the first and second kind, denoted by $K(k)$ and $E(k)$ respectively, admit the following equivalent representations via power series and hypergeometric expansions
\begin{equation}\label{eq:K_expansion}
    K(k)  \coloneqq  \int_{0}^{\pi/2} \frac{dt}{\sqrt{1-k^2\sin^2t}} = \frac{\pi}{2}\sum_{m=0}^{\infty}\frac{(2m)!^2}{2^{4m}m!^4}k^{2m} = \frac{\pi}{2} \, {_2F_1}\left(\frac{1}{2}, \frac{1}{2}; 1; k^2\right),
\end{equation}
with $k \in (0, 1)$ and
\begin{equation}\label{eq:E_expansion}
    E(k)  \coloneqq  \int_{0}^{\pi/2} \sqrt{1-k^2\sin^2t} \, dt = \frac{\pi}{2}\sum_{m=0}^{\infty}\frac{(2m)!^2}{2^{4m}(1-2m)m!^4}k^{2m} = \frac{\pi}{2} \, {_2F_1}\left(-\frac{1}{2}, \frac{1}{2}; 1; k^2\right), 
\end{equation}
with $k \in (0, 1]$.
These integrals satisfy Legendre \cite{legendre1825} relation
\begin{equation}\label{eq:legendre_relation}
    K(k)E(k') + K(k')E(k) - K(k)K(k') = \frac{\pi}{2},
\end{equation}
where $k' = \sqrt{1-k^2}$ represents the complementary modulus. The relation holds unconditionally in the complex domain provided that $\Re(k) > 0$, where $k' = \sqrt{1-k^2}$ denotes the principal branch of the complementary modulus.
The standard derivatives of $K(k)$ and $E(k)$ are given by
\begin{equation}\label{eq:derke}
  \frac{\mathrm{d}K}{\mathrm{d}k}=\frac{E(k)}{k(1-k^2)} - \frac{K(k)}{k},\hspace{.5cm}\frac{\mathrm{d}E}{\mathrm{d}k}=\frac{E(k)-K(k)}{k}.
\end{equation}
These complete elliptic integrals satisfy Landen descending transformations \cite{almkvistberndt1988, landen1775}
\begin{equation}\label{eq:landendownwardke}
    K(\frac{2\sqrt{k}}{1+k})=K(k)(1+k),\hspace{.5cm}E(\frac{2\sqrt{k}}{1+k})=\frac{2E(k)}{1+k}+K(k)(k-1),
\end{equation}
the Landen ascending transformations
\begin{equation}\label{eq:landeupwardke}
   K\left(\frac{1-k'}{1+k'}\right)= \frac{1+k'}{2}K(k),\hspace{.5cm}E(\frac{1-k'}{1+k'})=\frac{E(k)}{1+k'}+\frac{k'K(k)}{1+k'},
\end{equation}
and the Jacobi imaginary transformations
\begin{equation}\label{eq:jacobi_transke}
    K(ik) = \frac{1}{\sqrt{1+k^2}}K\left(\frac{k}{\sqrt{1+k^2}}\right),\hspace{.5cm}E(ik)=\sqrt{1+k^2}E\left(\frac{k}{\sqrt{1+k^2}}\right).
\end{equation}
holds strictly throughout the entire complex plane for the variable $k$, subject to the sole condition that the resulting modulus does not encounter the standard branch points, which implies $k^2 \neq -1$. When restricted to the real domain, the equality is satisfied unconditionally for all $k \in \mathbb{R}$.
Replacing  $k \to \frac{k}{k'}$ in \eqref{eq:jacobi_transke} we have
\begin{equation}\label{eq:jacobi_transke2}
K\left(\frac{ik}{k'}\right) = k'K(k),\hspace{.5cm}E\left(\frac{ik}{k'}\right) = \frac{1}{k'} E(k),
\end{equation}
with $k \in (0, 1)$. In this domain, the complementary modulus $k'$ remains real-valued and bounded such that $0 < k' < 1$, ensuring that the formulas map consistently to real outputs.
We also recall Clausen \cite{clausen1828} classical identity for the square of the first complete elliptic integral
\begin{equation}\label{eq:cl_1_2k}
\begin{aligned}
K^2(k) &= \frac{\pi^2}{4} \left[\,_2F_1\left(\frac{1}{2}, \frac{1}{2}; 1; k^2\right)\right]^2 = \frac{\pi^2}{4} \,_3F_2\left(\frac{1}{2}, \frac{1}{2}, \frac{1}{2}; 1, 1; 4k^2(1-k^2)\right) \\
&= \frac{\pi^2}{4}\sum_{m=0}^{\infty}\frac{(2m)!^3}{2^{4m}m!^6}\big(k^2(1-k^2)\big)^{m}, \quad 0 < k \le \frac{1}{\sqrt{2}}.
\end{aligned}
\end{equation}
And its counterpart for the product of both integrals
\begin{equation}\label{eq:cl_1_2e}
E(k)K(k)=\frac{\pi^2}{4}\sum_{m=0}^{\infty}\frac{(2m)!^3\left[m(1-2k^2)+(1-k^2)\right]}{2^{4m}m!^6}\big(k^2(1-k^2)\big)^{m},
\end{equation}
For $0<k<\frac{1}{\sqrt{2}}$. It is obtained from equation \eqref{eq:cl_1_2k} after differentiating with respect to $k$ and applying the derivative formula $\frac{\mathrm{d}K}{\mathrm{d}k}$ given in \eqref{eq:derke}.
We have to note that performing the transformation $k \to \frac{ik}{k'}$ into \eqref{eq:cl_1_2k} and using \eqref{eq:jacobi_transke2} for the first-kind integral gives also
\begin{equation}\label{eq:cl_1_2ki}
\begin{aligned}
K^2(k) &= \frac{\pi^2}{4(1-k^2)}\sum_{m=0}^{\infty}\frac{(-1)^m(2m)!^3}{2^{4m}m!^6}\left(\frac{k}{k^2-1}\right)^{2m},
\end{aligned}
\end{equation}
for  $0 < k \le \sqrt{2}-1$. Similarly, \eqref{eq:cl_1_2e} can be transformed using \eqref{eq:jacobi_transke2} for both first-kind and second-kind integrals
\begin{equation}\label{eq:cl_1_2ei}
E(k)K(k)=\frac{\pi^2}{4(1-k^2)}\sum_{m=0}^{\infty}\frac{(-1)^m(2m)!^3\left[mk^2+m+1\right]}{2^{4m}m!^6}\left(\frac{k}{k^2-1}\right)^{2m},
\end{equation}
for  $0 < k \le \sqrt{2}-1$. 
We shall employ the singular moduli $k_r$ corresponding to the indices $r \in \{2, 3, 4, 7\}$, which are explicitly defined as follows:
\begin{align}\label{eq:singularmoduli}
k_{2} &= \sqrt{2} - 1, & k_{3} &= \frac{\sqrt{2}}{4}(\sqrt{3}-1), \nonumber \\
k_{4} &= 3 - 2\sqrt{2}, & k_{7} &= \frac{\sqrt{2}}{8}(3 - \sqrt{7}).
\end{align}
These singular moduli uniquely satisfy the classical complex multiplication relation governed by the elliptic integral ratio:
\begin{equation}
\frac{K(k'_{r})}{K(k_{r})} = \sqrt{r}, \quad \text{for } r \in \{2, 3, 4, 7\}.
\end{equation}
The singular moduli $k_{2}$, $k_{3}$, and $k_{4}$ were originally calculated by Legendre \cite{legendre1825}, whereas $k_{7}$ was subsequently determined by Weber \cite{weber1895}.
\section{Explicit Evaluations of Ramanujan Series in the Theory of Signature 2}
\begin{lemma}\label{lem:elliptic_relation_lemma}
Let $k$ be a complex elliptic modulus satisfying $\text{Re}(k) > 0$. By invoking the Landen transformation of degree 2, the interaction between the complementary elliptic integrals and the transformed structures yields the identity:
\begin{equation}\label{lem:elliptic_relation}
\frac{K(k')}{K(k)}\left(K^2(k)-\frac{K(k)E(k)}{1+k}\right) = - \frac{\pi}{2(1+k)} + K(k)E\left(\frac{1-k}{1+k}\right).
\end{equation}
This relation holds analytically in the complex domain under the restriction $\text{Re}(k) > 0$, provided that the complementary modulus $k' = \sqrt{1-k^2}$ and its transformed counterpart are strictly evaluated on their principal branches to avoid sheet-transition ambiguities.
\end{lemma}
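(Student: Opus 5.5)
The plan is to apply the Legendre relation \eqref{eq:legendre_relation} not to $k$ itself but to the Landen-transformed modulus $l = \frac{1-k}{1+k}$. I will then rewrite every quantity except $E(l)$ through the degree-2 transformations in terms of $K(k)$, $E(k)$ and $K(k')$. The first step is the elementary observation that
\[
l' = \sqrt{1-l^2} = \frac{2\sqrt{k}}{1+k},
\]
so $l'$ is exactly the argument appearing in the descending Landen transformation \eqref{eq:landendownwardke}. That transformation gives
\[
K(l') = (1+k)K(k), \qquad E(l') = \frac{2E(k)}{1+k} + (k-1)K(k).
\]

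The second step handles $K(l)$. I apply the first half of the ascending transformation \eqref{eq:landeupwardke} with $k$ replaced by $k'$; since the complementary modulus of $k'$ is $k$, this yields $K(l) = \frac{1+k}{2}K(k')$. Only $E(l)$ is left untransformed, because it appears as such on the right-hand side of the lemma. Legendre's relation for the pair $(l,l')$ reads
\[
K(l)E(l') + K(l')E(l) - K(l)K(l') = \frac{\pi}{2}.
\]
Substituting the expressions above, the terms proportional to $K(k')K(k)$ combine through
\[
\tfrac{1+k}{2}(k-1) - \tfrac{(1+k)^2}{2} = -(1+k).
\]
The relation then becomes
\[
K(k')E(k) - (1+k)K(k)K(k') + (1+k)K(k)E(l) = \frac{\pi}{2}.
\]
Dividing by $1+k$ and rearranging gives
\[
K(k')K(k) - \frac{K(k')E(k)}{1+k} = -\frac{\pi}{2(1+k)} + K(k)E(l).
\]
The left side is precisely $\frac{K(k')}{K(k)}\bigl(K^2(k) - \frac{K(k)E(k)}{1+k}\bigr)$, which is the claimed identity \eqref{lem:elliptic_relation}.

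The algebra is routine. The step needing care is the validity of the tools on the stated domain. For real $k\in(0,1)$ every modulus involved lies in $(0,1)$, and the identity follows directly as above. For complex $k$ with $\operatorname{Re}(k)>0$, I would argue by analytic continuation. Both sides are holomorphic in $k$ on a suitable domain containing $(0,1)$, provided $k'$, $\sqrt{k}$ and $l'$ are taken on their principal branches; this is where the restriction $\operatorname{Re}(k)>0$ and the principal-branch hypothesis of the statement enter. The identity theorem then extends the relation from $(0,1)$. I expect the main obstacle to be checking that no branch cut of $K$, $E$ or the square roots is crossed along the continuation, so that the Landen and Legendre relations continue to hold with the same branches.
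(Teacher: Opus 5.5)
Your proposal is correct and follows the paper's own route: Legendre's relation for the pair $l=\frac{1-k}{1+k}$, $l'=\frac{2\sqrt{k}}{1+k}$, combined with the descending Landen formulas for $K(l')$, $E(l')$ and the ascending one for $K(l)$; your explicit algebra is what the paper compresses into ``the algebraic reduction yields the identity.'' Your identity-theorem treatment of complex $k$ is sounder than the paper's boundary-limit discussion, and easier than you expect: only $K(k)$, $E(k)$, $K(k')$ and $E(l)$ appear in the final identity, all holomorphic on the connected set $\{\text{Re}(k)>0\}\setminus[1,\infty)$ (where $|l|<1$), so no branch tracking of $\sqrt{k}$ or $l'$ is needed.
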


\begin{proof}
On the boundary of the unit disk, where $|k| = 1$ (excluding $k = \pm 1$), the homographic transformation $k_1 = \frac{1-k}{1+k}$ maps the elliptic modulus directly onto the imaginary axis, yielding $\text{Re}(k_1) = 0$. In this critical regime, the standard Legendre relation \eqref{eq:legendre_relation} encounters branch cuts for the complete elliptic integrals. However, by restricting our analysis to a punctured neighborhood of the origin—thereby strictly excluding zero—and considering the radial limit from the interior of the unit disk ($|k| \to 1^-$), the identity can be rigorously regularized without algebraic indeterminacies.

The analytic continuation of the complete elliptic integral of the first kind $K(k_1)$ near this boundary is governed by its hypergeometric representation:
\begin{equation*}
K(k_1) = \frac{\pi}{2} \, {}_2F_1\left(\frac{1}{2}, \frac{1}{2}; 1; k_1^2\right).
\end{equation*}
When $|k| = 1$ and $k = e^{i\theta} \neq \pm 1$, the parameter $k_1^2 = -\tan^2(\theta/2)$ is strictly negative and real. Consequently, the convergence of the underlying series is preserved via the alternating behavior of the hypergeometric coefficients. By establishing the limit
\begin{equation*}
\lim_{|k| \to 1^-} \left[ K(k_1)E(k_1') + K(k_1')E(k_1) - K(k_1)K(k_1') \right] = \frac{\pi}{2},
\end{equation*}
the invariance of the Legendre relation is extended to the boundary via radial asymptotic profiles. 
In this algebraic framework, we explicitly implement the classical descending Landen transformation \eqref{eq:landendownwardke}. Under this specific mapping, the transformed complementary modulus scales as:
\begin{equation}\label{eq:landen_modulus_complementary}
k_1' = \frac{2\sqrt{k}}{1+k},
\end{equation}
which structurally governs the dual modular dynamics. Since the first-kind complete elliptic integral satisfies the transformation rule:
\begin{equation}\label{eq:landen_k1_relation}
k_{1}=\frac{1-k}{1+k}.
\end{equation}
The algebraic interplay between the regularized boundary constraints and the equations \eqref{eq:landen_modulus_complementary}--\eqref{eq:landen_k1_relation} ensures the analytical stability required.
Consequently, by rearranging the modular structures and simplifying the complementary scales, the algebraic reduction yields the identity \eqref{lem:elliptic_relation}. This relation holds analytically in the complex domain under the restriction $\text{Re}(k) > 0$, provided that the complementary modulus $k' = \sqrt{1-k^2}$ and its transformed counterpart are strictly evaluated on their principal branches to avoid sheet-transition ambiguities.
\end{proof}

\begin{theorem}
The classical series representations for $1/\pi$ are valid
\begin{equation}\label{eq:bauer}
\sum_{m=0}^{\infty}\frac{(-1)^m(2m)!^3}{2^{6m}m!^6}(4m+1)=\frac{2}{\pi},
\end{equation}
\begin{equation}\label{eq:ramanujank3}
\sum_{m=0}^{\infty} \frac{(2m)!^3}{2^{8m} m!^6} (6m+1) = \frac{4}{\pi},
\end{equation}
\begin{equation}\label{eq:berndtetal}
\sum_{m=0}^{\infty}\frac{(-1)^m(2m)!^3}{2^{9m}m!^6}(6m+1)=\frac{2\sqrt{2}}{\pi},
\end{equation}
\begin{equation}\label{eq:ramanujank7}
\sum_{m=0}^{\infty} \frac{(2m)!^3}{2^{12m} m!^6} (42m+5) = \frac{16}{\pi}.
\end{equation}
\end{theorem}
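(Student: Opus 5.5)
The plan is to read each series in the Theorem as a specialization of the Clausen-type expansions \eqref{eq:cl_1_2k}--\eqref{eq:cl_1_2ei} at a singular modulus $k_r$. I then eliminate $E(k_r)$ using Lemma~\ref{lem:elliptic_relation_lemma} together with the Legendre relation \eqref{eq:legendre_relation}, so that only $\pi$ survives.

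\textbf{Step 1: match each series to a singular modulus.}
\begin{itemize}
\item For \eqref{eq:bauer}, take the alternating forms \eqref{eq:cl_1_2ki}, \eqref{eq:cl_1_2ei}. The requirement $(k/(k^2-1))^{2}=1/4$ gives $k^2+2k-1=0$, so $k=k_2=\sqrt2-1$. This is exactly the endpoint of the admissible range, so convergence there has to be justified, e.g.\ by Abel's theorem.
\item For \eqref{eq:berndtetal}, I need $(k/(1-k^2))^2=1/32$. A direct check shows $k_4=3-2\sqrt2$ satisfies $k_4/(1-k_4^2)=\sqrt2/8$.
\item For \eqref{eq:ramanujank3}, use \eqref{eq:cl_1_2k}, \eqref{eq:cl_1_2e} with $k^2(1-k^2)=1/16$. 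This gives $k^2=(2-\sqrt3)/4$, i.e.\ $k=k_3$.
\item For \eqref{eq:ramanujank7}, $k^2(1-k^2)=1/256$ gives $k=k_7$.
\end{itemize}

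\textbf{Step 2: reduce each sum to a linear combination.}
At each point write $\sum (am+b)c_m x^m$, where $c_m$ are the common coefficients, as $\alpha K^2(k_r)+\beta K(k_r)E(k_r)$. The two identities give $\sum c_m x^m$ and $\sum (m\gamma+\delta)c_m x^m$, with explicit algebraic $\gamma,\delta$ from $1-2k^2$, $1-k^2$, $k^2+1$. Solving this $2\times2$ linear system for $\alpha,\beta$ is routine. It then suffices to show that $\beta E(k_r)/K(k_r)+\alpha$ equals the required multiple of $\pi/(2K^2(k_r))$, i.e.\ to find $E(k_r)$ in the form $E(k_r)=uK(k_r)+v\,\pi/K(k_r)$ with explicit algebraic $u,v$.

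\textbf{Step 3: find $E(k_r)$ using Lemma~\ref{lem:elliptic_relation_lemma}.}
\begin{itemize}
\item For $r=2$: $(1-k_2)/(1+k_2)=k_2$, so $E\bigl((1-k)/(1+k)\bigr)=E(k_2)$. Substituting $K(k_2')/K(k_2)=\sqrt2$ makes the lemma a linear equation in $E(k_2)$ with coefficients in $K(k_2)$ and $\pi$. This yields $u,v$ directly.
\item For $r=4$: $(1-k_4)/(1+k_4)=1/\sqrt2$. The value $E(1/\sqrt2)$ follows from the Legendre relation at the self-complementary modulus, $2KE-K^2=\pi/2$, combined with \eqref{eq:landendownwardke} to write $K(1/\sqrt2)$ in terms of $K(k_4)$. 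Then the lemma with $K(k_4')/K(k_4)=2$ gives $E(k_4)$.
\item For $r=3,7$: apply the lemma at $k=(1-k_r')/(1+k_r')$, for which $(1-k)/(1+k)=k_r'$. The ascending Landen formulas \eqref{eq:landeupwardke} express $K(k)$ and $E(k)$ through $K(k_r)$ and $E(k_r)$. The complementary integral $K(k')$ is tied to $K(k_r')$ by \eqref{eq:landendownwardke}, so $K(k')/K(k)=2\sqrt r$ is known from $K(k_r')/K(k_r)=\sqrt r$. The lemma then gives one linear relation among $E(k_r)$, $E(k_r')$, $K(k_r)$ and $\pi$. The Legendre relation at $k_r$, with $K(k_r')=\sqrt r\,K(k_r)$, gives a second. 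Eliminating $E(k_r')$ yields $E(k_r)$.
\end{itemize}

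\textbf{Step 4 and main obstacle.}
Finally, insert $u,v$ into $\alpha K^2+\beta KE$. The $K^2$ terms must cancel identically, leaving the stated multiple of $1/\pi$. The main difficulty I expect is the surd algebra in Steps 3--4 for $k_3$ and especially $k_7$. There one must simplify $(1-k_r')/(1+k_r')$, $1+k$ and the Landen factors, all involving nested radicals such as $k_7'=\sqrt{1-k_7^2}$. Verifying that the $K^2$ coefficient vanishes exactly is then the crux. I would first rationalize $k_r'$ explicitly, e.g.\ $k_3'=\tfrac{\sqrt2}{4}(\sqrt3+1)$, to keep every expression in $\mathbb{Q}(\sqrt2,\sqrt r)$, and only then perform the elimination.
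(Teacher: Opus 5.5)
\textbf{The gap is in Step 3 for $r=3,7$: the two equations you combine there are the same equation.} Your cases $r=2$ and $r=4$ are sound and essentially follow the paper. For $r=4$ you take $E(1/\sqrt2)$ from Legendre's relation at the self-complementary modulus, whereas the paper writes it via ascending Landen; both lead to $16E(k_4)K(k_4)-32(\sqrt2-1)K^2(k_4)=2\pi$. Your Abel's-theorem remark at the endpoint $k_2$ covers a point the paper passes over.

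For $r=3,7$, put $a=k_r'$, $k=(1-a)/(1+a)$, $K=K(k_r)$, $E=E(k_r)$. Landen gives
\begin{equation*}
K(k)=\tfrac{1+a}{2}K,\quad E(k)=\tfrac{E+aK}{1+a},\quad 1+k=\tfrac{2}{1+a},\quad K(k')=(1+a)\sqrt r\,K.
\end{equation*}
After dividing by $(1+a)/2$, the lemma becomes
\begin{equation*}
\sqrt r\,\bigl(K^2-KE\bigr)=-\frac{\pi}{2}+K\,E(k_r').
\end{equation*}
This is exactly Legendre's relation \eqref{eq:legendre_relation} at $k_r$ once you insert $K(k_r')=\sqrt r\,K$. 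Eliminating $E(k_r')$ therefore gives $0=0$, and $E(k_r)$ stays undetermined.

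This failure is structural, not computational. The lemma is just Legendre's relation at $(1-k)/(1+k)$ transported by Landen. Choosing $(1-k)/(1+k)=k_r'$ and undoing the Landen maps simply returns Legendre's relation at $k_r$. In terms of $\tau=iK'/K$, real Landen steps act as $\tau\mapsto 2\tau$ or $\tau/2$, and complementation acts as $\tau\mapsto -1/\tau$. From $\tau=i\sqrt r$ these reach only the points $2^n i\sqrt r$ and $2^n i/\sqrt r$ with $n\in\mathbb{Z}$. A closed cycle, which is what yields an equation for $E(k_r')$ beyond Legendre's relation, occurs only when $r$ is a power of $2$. That is why your method works for $r=2,4$ and cannot work for $r=3,7$. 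The surd algebra you flag as the main obstacle is not the real difficulty.

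\textbf{What is missing is a transformation outside this group.} The paper gets it by leaving the real axis, with $s_3=e^{i\pi/6}$ and $s_7=\tfrac38-\tfrac{\sqrt7}{8}i$.
\begin{itemize}
\item Ascending Landen combined with Jacobi's imaginary transformation \eqref{eq:jacobi_transke}, which realizes $\tau\mapsto\tau+1$, expresses $K(s_r)$, $E(s_r)$, $K(s_r')$ and $E\bigl((1-s_r)/(1+s_r)\bigr)$ linearly in $K(k_r)$ and $E(k_r)$.
\item The period ratio $K(s_r')/K(s_r)$ is then the non-real number $(\sqrt r\pm i)/2$.
\item Evaluating the lemma at $s_r$ and taking real parts gives genuinely new relations: $12E(k_3)K(k_3)-(2\sqrt3+6)K^2(k_3)=\sqrt3\pi$ and $28E(k_7)K(k_7)-(4\sqrt7+14)K^2(k_7)=\sqrt7\pi$.
\end{itemize}
If you stay with real moduli, you would instead need a modular equation of degree $3$ or $7$ together with its derivative, which goes beyond the degree-2 tools. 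Without one of these ingredients, your Steps 3--4 for $r=3,7$ cannot be completed.
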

\begin{proof}
\noindent\textbf{Proof of \eqref{eq:bauer}.}
For this case we use $k_{2}=\sqrt{2}-1$ from \eqref{eq:singularmoduli}. Let us invoke Landen's ascending transformation \eqref{eq:landeupwardke} and replace $k'$ by $k$ for the complete elliptic integral of the first kind
\begin{equation}
K\left(\frac{1-k}{1+k}\right) = \frac{1+k}{2}K(k'). \label{eq:landenascending_real}
\end{equation}
By imposing the fixed-point condition under the homographic mapping, we set $\frac{1-k_2}{1+k_2} = k_2=\sqrt{2}-1$. Substituting $k_2$ into \eqref{eq:landenascending_real} directly yields the desired period ratio
\begin{equation*}
K(k_2) = \frac{\sqrt{2}}{2}K(k'_2) \implies \frac{K(k'_2)}{K(k_2)} = \sqrt{2}.
\end{equation*}
Since we have seen that $K(k'_2)/K(k_2)=\sqrt{2}$ and $k_2=\frac{1-k_2}{1+k_2}$ then using \eqref{lem:elliptic_relation} yields
\begin{equation}\label{eq:rel_n2}
\sqrt{2}\left(K^2(k_2)-\frac{K(k_2)E(k_2)}{\sqrt{2}} \right)=-\frac{\pi}{2\sqrt{2}}+K(k_2)E(k_2) \implies 8E(k_2)K(k_2) - 4\sqrt{2}K^2(k_2) = \sqrt{2}\pi.
\end{equation}

Using Eqs.~\eqref{eq:cl_1_2ki}, \eqref{eq:cl_1_2ei} and \ref{eq:rel_n2} then
\begin{align*}
\sqrt{2}\pi=8E(k_2)K(k_2) - 4\sqrt{2}K^2(k_2) &=  \frac{\sqrt{2}\pi^2}{2}\sum_{m=0}^{\infty}\frac{(-1)^m(2m)!^3}{2^{6m}m!^6}(4m+1) \\
\implies \sum_{m=0}^{\infty}\frac{(-1)^m(2m)!^3}{2^{6m}m!^6}(4m+1)=\frac{2}{\pi}.
\end{align*}

\medskip
\noindent\textbf{Proof of \eqref{eq:ramanujank3}.}
For this case we use $k_{3}=\frac{\sqrt{2}(\sqrt{3}-1)}{4}$ from \eqref{eq:singularmoduli}. Let $s_3 = i^{1/3} = \frac{\sqrt{3}}{2} + \frac{i}{2}$. We begin by establishing the following identity connecting the complete elliptic integrals evaluated at $s_3$ and $k_3$
\begin{equation}
E(s_{3})K(s_{3})=2E(k_{3})K(k_{3})-K^2(k_{3}). \label{eq:s3_identity}
\end{equation}
Applying Landen ascending transformation \eqref{eq:landeupwardke} followed by Jacobi imaginary transformation \eqref{eq:jacobi_transke} to the first-kind integral yields
\begin{equation}
K(s_3)=(1+i(2-\sqrt{3}))K(i(2-\sqrt{3})) = (k'_3-ik_3)K(k_3). \label{eq:K_s3_eval}
\end{equation}
On the other hand, utilizing again Eq.~\eqref{eq:landeupwardke} in conjunction with Eqs.~\eqref{eq:jacobi_transke}, the second-kind integral expands as
\begin{align}
E(s_3) &= \left(1+\frac{\sqrt{3}}{2}-\frac{i}{2}\right)E(i(2-\sqrt{3})) - \left(\frac{\sqrt{3}}{2}-\frac{i}{2}\right)K(s_3) \nonumber \\
&=\left(1+\frac{\sqrt{3}}{2}-\frac{i}{2}\right)\left(\sqrt{6}-\sqrt{2}\right)E(k_3) - \left(\frac{\sqrt{3}}{2}-\frac{i}{2}\right)K(s_3) \nonumber \\
&\overset{\eqref{eq:K_s3_eval}} =2\left(k'_3+ik_3 \right)E(k_3)-\left(k'_3+ik_3 \right)K(k_3). \label{eq:E_s3_eval}
\end{align}
Direct algebraic multiplication of Eqs.~\eqref{eq:K_s3_eval} and \eqref{eq:E_s3_eval} confirms the validity of the relation in Eq.~\eqref{eq:s3_identity}. Similarly, evaluating the complementary transformed modulus gives
\begin{equation}
K(s'_3) = K\left(\frac{\sqrt{3}}{2}-\frac{i}{2}\right) \overset{\eqref{eq:landeupwardke}  }= (1+i(\sqrt{3}-2))K(i(\sqrt{3}-2)) \overset{\eqref{eq:jacobi_transke}}=\left(k'_3+ik_3\right)K(k_3). \label{eq:K_s3_prime_eval}
\end{equation}
Evaluating \ref{lem:elliptic_relation} at $s_3 = i^{1/3}$ yields
\begin{equation}
\left(\frac{\sqrt{3}}{2}-\frac{i}{2}\right)\left(K^2(s_3) - \frac{K(s_3)E(s_3)}{1+s_3}\right) = -\frac{\pi}{2(s_3+1)} + K(s_3)E(i(\sqrt{3}-2)), \label{eq:lemma_evaluated}
\end{equation}
where we have utilized the ratio derived from Eqs.~\eqref{eq:K_s3_eval} and \eqref{eq:K_s3_prime_eval}, which implies $\frac{K(s'_3)}{K(s_3)} = \frac{\sqrt{3}}{2}-\frac{i}{2}$. Substituting the uncoupled identity \eqref{eq:s3_identity} into Eq.~\eqref{eq:lemma_evaluated} gives
\begin{equation}
\left(\frac{\sqrt{3}}{2}-\frac{i}{2}\right)\left(K^2(s_3) - \frac{2E(k_3)K(k_3) - K^2(k_3)}{1+s_3}\right) = -\frac{\pi}{2(s_3+1)} + K(s_3)E(i(\sqrt{3}-2)). \label{eq:lemma_substituted}
\end{equation}
Applying the imaginary transformation from Eq.~\eqref{eq:jacobi_transke} simplifies the remaining second-kind integral to
\begin{equation}
E(i(\sqrt{3}-2))=-4k_3E(k_3). \label{eq:E_imag_final}
\end{equation}
Finally, using the value of $s_{3}$, combining Eqs.~\eqref{eq:K_s3_eval}, \eqref{eq:lemma_substituted}, and \eqref{eq:E_imag_final},  collecting the terms, we arrive at the complex algebraic balance
\begin{samepage}
\begin{align*}
K^2(k_3)&\left(\frac{\sqrt{3}}{2}+\frac{1}{2}\right) + (1-\sqrt{3})E(k_3)K(k_3) + i\left[K^2(k_3)\left(\frac{1}{2}-\frac{\sqrt{3}}{2}\right) + (\sqrt{3}-1)E(k_3)K(k_3)\right] \\
&= E(k_3)K(k_3) - \frac{\pi}{4} + i\left[(2-\sqrt{3})E(k_3)K(k_3) + \pi\left(\frac{1}{2}-\frac{\sqrt{3}}{4}\right)\right].
\end{align*}
\end{samepage}
By equating the real parts of both sides of this last equality (the imaginary parts yield an identical constraint), we obtain
\begin{equation}\label{eq:rel_n3}
12E(k_3)K(k_3) - (2\sqrt{3}+6)K^2(k_3) = \sqrt{3}\pi.
\end{equation}

Using Eqs.~\eqref{eq:cl_1_2k}, \eqref{eq:cl_1_2e} and \ref{eq:rel_n3}
\begin{align*}
\sqrt{3}\pi = 12E(k_3)K(k_3) - (2\sqrt{3}+6)K^2(k_3) &= \frac{\sqrt{3}\pi^2}{4} \sum_{m=0}^{\infty} \frac{(2m)!^3}{2^{8m} m!^6} (6m+1) \\
\implies \sum_{m=0}^{\infty} \frac{(2m)!^3}{2^{8m} m!^6} (6m+1) = \frac{4}{\pi}.
\end{align*}

\medskip
\noindent\textbf{Proof of \eqref{eq:berndtetal}.}
For this case we use $k_{4}=3-2\sqrt{2}$ from \eqref{eq:singularmoduli}. Recalling Landen's ascending transformation
\begin{equation*}
K\left(\frac{1-k}{1+k}\right) = \frac{1+k}{2}K(k'). 
\end{equation*}
By setting the modulus $k_4 = 3-2\sqrt{2}$ in this last formula, the homographic mapping yields exactly the first singular modulus $\frac{1-k_4}{1+k_4} = k_1 = \frac{1}{\sqrt{2}}$, which yields
\begin{equation}
K(k_1) = \left(2-\sqrt{2} \right)K(k'_4).\label{eq:relation1_k4}
\end{equation}
Since we also have using \eqref{eq:landeupwardke} that
\begin{equation}
K(k_4) =\left(\frac{\sqrt{2}}{4}+\frac{1}{2} \right)K(k_1).\label{eq:relation2_k4}
\end{equation}
Combining \eqref{eq:relation1_k4} and \eqref{eq:relation2_k4} directly yields the period ratio
\begin{equation*}
\frac{K(k'_4)}{K(k_4)} = 2.
\end{equation*}
Since we have seen that $K(k'_4)/K(k_4)=2$ and $k_1=\frac{1-k_4}{1+k_4}$ then using \eqref{lem:elliptic_relation} yields
\begin{equation}\label{eq:eqcase4}
2\left(K^2(k_4)-\frac{K(k_4)E(k_4)}{4-2\sqrt{2}} \right)=-\frac{\pi}{2(4-2\sqrt{2})}+K(k_4)E(k_1), 
\end{equation}
We invoke Landen ascending transformation \eqref{eq:landeupwardke} for the second-kind integral and then
\begin{equation*}
E(\frac{1-k'_1}{1+k'_1})=E(k_4)=\frac{E(k_1)}{1+k'_1}+\frac{k'_1K(k_1)}{1+k'_1},
\end{equation*}
Also we have seen that $K(k_4)=(\frac{\sqrt{2}}{4}+\frac{1}{2})K(k_1)$ and substituting it in the last equation
\begin{equation}\label{eq:eqcase4decoupled}
E(k_1)=(2-2\sqrt{2})E(k_4)+(1+\frac{1}{\sqrt{2}})K(k_4)
\end{equation}
Replacing \eqref{eq:eqcase4decoupled} into \eqref{eq:eqcase4} yields
\begin{equation}\label{eq:rel_n4}
16E(k_4)K(k_4) - 32(\sqrt{2}-1)K^2(k_4) = 2\pi.
\end{equation}

Using Eqs.~\eqref{eq:cl_1_2ki}, \eqref{eq:cl_1_2ei} and \ref{eq:rel_n4} then
\begin{align*}
2\pi=16E(k_4)K(k_4) - 32(\sqrt{2}-1)K^2(k_4) &= \frac{\pi^2}{\sqrt{2}}\sum_{m=0}^{\infty}\frac{(-1)^m(2m)!^3}{2^{9m}m!^6}(6m+1) \\
\implies \sum_{m=0}^{\infty}\frac{(-1)^m(2m)!^3}{2^{9m}m!^6}(6m+1) = \frac{2\sqrt{2}}{\pi}.
\end{align*}

\medskip
\noindent\textbf{Proof of \eqref{eq:ramanujank7}.}
For this case, the same approach works as for the previous ones with $k_{7}=\frac{\sqrt{2}(3-\sqrt{7})}{8}$ from \eqref{eq:singularmoduli}.
Let $s_7 = \frac{3}{8} - \frac{\sqrt{7}i}{8}$. We begin by establishing the following algebraic relation between the complete elliptic integrals
\begin{equation}
E(s_7)K(s_7) = 2E(k_7)K(k_7) - K^2(k_7). \label{eq:s7_base_identity}
\end{equation}
To verify Eq.~\eqref{eq:s7_base_identity}, we explicitly evaluate the first-kind complete elliptic integral using \eqref{eq:landeupwardke} and \eqref{eq:jacobi_transke} yields
\begin{align}
K(s_7) = K\left(\frac{3}{8}-\frac{\sqrt{7}i}{8}\right) &=(1+i(3\sqrt{7}-8))K(i(3\sqrt{7}-8))=\left(k'_7-ik_7\right)K(k_7).\label{eq:K_s7_eval_7}
\end{align}
Similarly, the second-kind integral can be expanded by applying the corresponding transformations recursively
\begin{align}
E(s_7) = E\left(\frac{3}{8}-\frac{\sqrt{7}i}{8}\right) &\overset{\eqref{eq:landeupwardke}}{=} \left(1+\frac{3\sqrt{7}}{8}+\frac{i}{8}\right)E(i(3\sqrt{7}-8)) - \left(\frac{3\sqrt{7}}{8}+\frac{i}{8}\right)K\left(s_7\right) \nonumber \\
&\overset{\eqref{eq:jacobi_transke}}{=} \left(1+\frac{3\sqrt{7}}{8}+\frac{i}{8}\right)(6\sqrt{2}-2\sqrt{14})E(k_7) - \left(\frac{3\sqrt{7}}{8}+\frac{i}{8}\right)K\left(s_7\right) \nonumber \\
&\overset{\eqref{eq:K_s7_eval_7}}{=}2(k'_7+ik_7)E(k_7)-(k'_7+ik_7)K(k_7).\label{eq:E_s7_eval_7}
\end{align}
Direct algebraic multiplication of Eqs.~\eqref{eq:K_s7_eval_7} and \eqref{eq:E_s7_eval_7} confirms the validity of the constraint in Eq.~\eqref{eq:s7_base_identity}. Next, evaluating the first-kind integral at the complementary transformed modulus gives
\begin{align}
K(s'_7) = K\left(\frac{3\sqrt{7}}{8}+\frac{i}{8}\right) &\overset{\eqref{eq:landeupwardke}}{=} \left(\frac{11}{8}+\frac{\sqrt{7}i}{8}\right)K\left(\frac{3}{8}+\frac{\sqrt{7}i}{8}\right) \nonumber \\
&\overset{\eqref{eq:landeupwardke}}{=} \left(\frac{11}{8}+\frac{\sqrt{7}i}{8}\right)(1+i(8-3\sqrt{7}))K(i(8-3\sqrt{7})) \nonumber \\
&\overset{\eqref{eq:jacobi_transke}}{=} \left((4-\sqrt{7})k'_7+ik_7(4+\sqrt{7})\right)K(k_7). \label{eq:K_s7_prime_eval_7}
\end{align}
Evaluating \eqref{lem:elliptic_relation} at the complex modulus $s_7 = \frac{3}{8} - \frac{\sqrt{7}i}{8}$ and substituting the relations from Eqs.~\eqref{eq:s7_base_identity}, \eqref{eq:K_s7_eval_7}, and \eqref{eq:K_s7_prime_eval_7} which directly imply that $\frac{K(s'_7)}{K(s_7)} = \frac{\sqrt{7}}{2} + \frac{i}{2}$ we obtain the following system
\begin{equation}
\left(\frac{\sqrt{7}}{2}+\frac{i}{2}\right)\left(K^2(s_7) - \frac{2E(k_7)K(k_7)-K^2(k_7)}{1+s_7}\right) = -\frac{\pi}{2(1+s_7)} + K(s_7)E\left(\frac{1-s_7}{1+s_7}\right). \label{eq:lemma_sub_7}
\end{equation}
On the other hand, the remaining transformed second-kind complete elliptic integral on the right-hand side can be carefully decoupled.
Applying \eqref{eq:landeupwardke} consecutively, followed by the imaginary transformations in \eqref{eq:jacobi_transke}, the complete elliptic integral of the second kind evaluates directly to
\begin{align}
E\left(\frac{1-s_7}{1+s_7}\right) &= E\left(\frac{3}{8}+\frac{\sqrt{7}i}{8}\right)\nonumber \\ 
&=  \left( 1 + \frac{3\sqrt{7}}{8} \right) E(i(8 - 3\sqrt{7})) - \frac{3\sqrt{7}}{8} K(\frac{3}{8} + \frac{\sqrt{7}i}{8})  + \frac{i}{8} \left[ K\left( \frac{3}{8} + \frac{\sqrt{7}i}{8} \right) - E(i(8 - 3\sqrt{7})) \right] \nonumber \\ 
&= 2(k'_7 - i k_7)E(k_7) + (-k'_7 + i k_7)K(k_7), \label{eq:E_landen_decoupled_7}
\end{align}
where we have used $K\left( \frac{3}{8} + \frac{\sqrt{7}i}{8} \right)=(k'_7+ik_7)K(k_7)$, wich follows from \eqref{eq:K_s7_prime_eval_7}. Finally, using the value of $s_7$, substituting the explicit evaluations of Eqs.~\eqref{eq:K_s7_eval_7} and \eqref{eq:E_landen_decoupled_7} into the structured relation from Eq.~\eqref{eq:lemma_sub_7} results in the comprehensive complex equality
\begin{align*}
\left(\frac{5\sqrt{7}}{16}+\frac{11}{8}\right)K^2(k_7) &- \frac{5\sqrt{7}K(k_7)E(k_7)}{8} + i\left[\left(\frac{\sqrt{7}}{8}+\frac{9}{16}\right)K^2(k_7) - \frac{9K(k_7)E(k_7)}{8}\right] \nonumber \\
&= -\frac{3\sqrt{7}K^2(k_7)}{8} + \frac{3\sqrt{7}K(k_7)E(k_7)}{4} - \frac{11\pi}{32}  + i\left[\frac{K^2(k_7)}{8} - \frac{K(k_7)E(k_7)}{4} - \frac{\sqrt{7}\pi}{32}\right].
\end{align*}
By equating the real parts or imaginary parts of both sides of this last expression, we obtain the isolated real balance equation
\begin{equation}\label{eq:rel_n7}
28E(k_7)K(k_7) - (4\sqrt{7}+14)K^2(k_7) = \sqrt{7}\pi.
\end{equation}

Using Eqs.~\eqref{eq:cl_1_2k}, \eqref{eq:cl_1_2e} and \ref{eq:rel_n7}
\begin{align*}
\sqrt{7}\pi = 28E(k_7)K(k_7) - (4\sqrt{7}+14)K^2(k_7) &= \frac{\sqrt{7}\pi^2}{16} \sum_{m=0}^{\infty} \frac{(2m)!^3}{2^{12m} m!^6} (42m+5) \nonumber \\
\implies \sum_{m=0}^{\infty} \frac{(2m)!^3}{2^{12m} m!^6} (42m+5) = \frac{16}{\pi}.
\end{align*}
\end{proof}The series \eqref{eq:bauer} corresponds to the celebrated formula of Bauer \cite{bauer1859}, whereas \eqref{eq:berndtetal} is due to Berndt et al. \cite{baruahberndt2009eisenstein}. The remaining two relations, \eqref{eq:ramanujank3} and \eqref{eq:ramanujank7}, correspond to formulas (28) and (29) in Ramanujan's seminal paper \cite{ramanujan1914}. To the best of our knowledge, these series have also been proven using the Wilf--Zeilberger (WZ) method \cite{guillera2002WZ}.

\section{Conclusion}
In this paper, we have provided a transparent and self-contained analytical framework to derive the known rational Ramanujan-type series for the signature 2 theory. By relying strictly on classical results from the theory of complete elliptic integrals and hypergeometric functions, our derivations successfully bypass the sophisticated and often cumbersome machinery of modular functions. Furthermore, we have demonstrated that a single, unified modular transformation of degree 2 is a sufficiently robust tool to deconstruct the series associated to the singular moduli $k_r$ for the indices $r \in \{2, 3, 4, 7\}$. 

Ultimately, this methodology highlights that complex elliptic constraints can be effectively managed through elementary transformations, bridging the gap between classical analysis and combinatorial number theory while rendering these intricate identities accessible to a broader mathematical audience.

\backmatter

\section*{Declarations}
\begin{itemize}
    \item \textbf{Acknowledgments:} Generative AI was used solely for English language translation and linguistic polishing of the manuscript.
    \item \textbf{Funding:} The author received no financial support for the research, authorship, and/or publication of this article.
    \item \textbf{Conflict of interest:} The author declares no competing interests.
    \item \textbf{Ethical approval:} Not applicable.
    \item \textbf{Data availability:} Data sharing is not applicable to this article as no datasets were generated or analyzed during the current study.
    \item \textbf{Authors' contributions:} Entirely prepared by the sole author.
\end{itemize}


\begin{thebibliography}{99}

\bibitem{almkvistberndt1988}
Almkvist, G., Berndt, B.C.: Gauss, Landen, Ramanujan, the arithmetic-geometric mean, ellipses, $\pi$, and the Ladies' Diary. Amer. Math. Monthly \textbf{95}, 585--608 (1988)

\bibitem{baruahberndt2009eisenstein}
Baruah, N.D., Berndt, B.C.: Eisenstein series and Ramanujan-type series for $1/\pi$. The Ramanujan Journal \textbf{20}, 267--287 (2009)

\bibitem{bauer1859}
Bauer, G.: Von den Coefficienten der Reihen von Kugelfunctionen einer Variabeln. J. Reine Angew. Math. \textbf{56}, 101--121 (1859)

\bibitem{clausen1828}
Clausen, T.: Ueber die Fälle, wenn die Reihe von der Form $y = 1 + \dots$ etc. ein Quadrat von der Form $z = 1 \dots$ etc. hat. J. Reine Angew. Math. \textbf{3}, 89--91 (1828)

\bibitem{gauss1813}
Gauss, C.F.: Disquisitiones generales circa seriem infinitam $1 + \frac{\alpha\beta}{1\cdot\gamma}x + \frac{\alpha(\alpha+1)\beta(\beta+1)}{1\cdot2\cdot\gamma(\gamma+1)}xx + \dots$ Comment. Soc. Regiae Sci. Gottingensis Rec. \textbf{2} (1813). Reprinted in: Werke, Vol. 3, pp. 123--162

\bibitem{guillera2002WZ}
Guillera, J.: Some binomial series obtained by the WZ-method. Advances in Applied Mathematics \textbf{29}, 599--603 (2002)

\bibitem{landen1775}
Landen, J.: An investigation of a general theorem for finding the length of any arc of any conic hyperbola, by means of two elliptic arcs, with some other new and useful theorems deduced therefrom. Philos. Trans. R. Soc. Lond. \textbf{65}, 283--289 (1775)

\bibitem{legendre1825}
Legendre, A.M.: Traité des funciones elliptiques et des intégrales eulériennes, Vol. 1. Imprimerie de Huzard-Courcier, Paris (1825)

\bibitem{ramanujan1914}
Ramanujan, S.: Modular equations and approximations to $\pi$. Quart. J. Pure Appl. Math. \textbf{45}, 350--372 (1914)

\bibitem{weber1895}
Weber, H.: Lehrbuch der Algebra, Vol. 3. Vieweg und Sohn, Braunschweig (1895). Reprinted by Chelsea, New York (1961)

\end{thebibliography}
\end{document}